 \newtheorem{thm}{Theorem}[section]
 \newtheorem{cor}[thm]{Corollary}
 \newtheorem{lem}[thm]{Lemma}
 \newtheorem{rem}[thm]{Remark}
 \newtheorem{defn}[thm]{Definition}
 \theoremstyle{definition}
 \theoremstyle{remark}
 \numberwithin{equation}{section}
 \newcommand{\p}{\frak p}
\begin{document}

\title[On the equivalence of $FSF$ and weakly Laskerian classes]
{On the equivalence of \\${\rm FSF}$ and weakly Laskerian classes}

\author[K. bahmanpour and A. Khojali  ]{K. bahmanpour and A. Khojali$^*$  }
\address{\newline Department of Mathematics,\newline University of Mohaghegh Ardabili,\newline Ardabil,\newline
Iran.\newline E-mail Address: khojali@uma.ac.ir, khojali@gmail.com}
\address{\newline Department of Mathematics,\newline Islamic Azad University-Ardabil branch,\newline PO Box 5614633167,\newline Ardabil,
\newline Iran\newline E-mail addresses: bahmanpour.k@gmail.com, bahmanpour30@yahoo.com}
\email{}

\thanks{$^*$Corresponding author}

\thanks{}

\subjclass[2000]{}

\keywords{Associated primes, ${\rm FSF}$ modules, Krull dimension,
Minimax modules, Laskerian modules, Weakly Laskerian modules. }

\date{}

\dedicatory{}

\commby{}


\begin{abstract}
It is proved that, over a Noetherian ring $R$, the class of weakly
Laskerian and ${\rm FSF}$ modules are the same classes. By using
this characterization we proved that the property of being weakly
Laskerian descends by finite integral extensions of local ring
homomorphisms and ascends by tensoring under the completion.
\end{abstract}

\maketitle
\section*{\textbf{Introduction}}
\smallskip
Throughout this paper, $R$ will denote a commutative ring with
identity element $1_R$ and all modules are considered unitary. It is
well known that if $R$ is a Noetherian ring and $M$ a finitely
generated $R$-module, then for all proper submodule $N$ of $M$ the
set of associated prime ideals of $M/N$, abbreviated as ${\rm
Ass}_R(M/N)$ or ${\rm Ass}(M/N)$ if there is no confusion about the
underlying ring $R$, is a finite set. In fact, the class of
Laskerian modules has the property that the set of associated prime
ideals of any quotient of a Laskerian module is a finite set and it
is not hard to find examples of Laskerian modules which is not
finitely generated. Recall that an $R$-module is called
\emph{Laskerian} if every proper submodule is an intersection of a
finite number of primary submodules. The importance of the
finiteness of ${\rm Ass}(M/N)$, for a submodule $N$ of an $R$-module
$M$, comes back to the fact that the finiteness of ${\rm Ass}(M/N)$
guarantees the existence of regular elements on $M/N$, which is of
the most importance in the study of cohomological properties of
$M/N$. Thinking of these facts and toward a general study of Local
cohomology modules  in [\ref{divmafi1}], Divaani-Aazar and Mafi
introduced the class of weakly Laskerian modules and proved some
interesting properties of their local cohomology modules which were
known just for finite modules. Recall that an $R$-module $M$ is
called \emph{weakly Laskerian} if ${\rm Ass}(M/N)$ is a finite set
for each proper submodule $N$ of $M$. It is easy to see that the
class of Artinian, Laskerian and Minimax modules are contained in
the class of weakly Laskerian modules. Recently, Hung Quy
[\ref{hung}], introduced the class of ${\rm FSF}$ modules, modules
containing some finitely generated submodules such that the support
of the quotient module is finite, and proved the following result.
\begin{thm}
Assume that $\frak a$ is an ideal of the Noetherian ring R, and let
$M$ be a finitely generated R-module. Let $t\in \Bbb{N}_0$ be such
that either $H^i_{\frak a}(M)$ is finite or ${\rm Supp}(H^i_{\frak
a} (M))$ is finite for all $i<t$. Then ${\rm Ass}(H^t_{\frak a}(M))$
is finite.
\end{thm}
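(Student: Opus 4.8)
The plan is to deduce the theorem from the stronger statement, proved by induction on $t$, that \emph{if $H^i_{\frak a}(M)$ is ${\rm FSF}$ for all $i<t$, then $\operatorname{Hom}_R(R/\frak a,H^t_{\frak a}(M))$ is ${\rm FSF}$}. This is enough, because every finitely generated module and every module with finite support is ${\rm FSF}$ (so the stated hypothesis gives that $H^i_{\frak a}(M)$ is ${\rm FSF}$ for $i<t$); because an ${\rm FSF}$ module $T$ has only finitely many associated primes (if $N\subseteq T$ is finitely generated with $\operatorname{Supp}(T/N)$ finite, then $\operatorname{Ass}_R(T)\subseteq\operatorname{Ass}_R(N)\cup\operatorname{Supp}_R(T/N)$, both finite); and because for any $\frak a$-torsion module $T$ one has $\operatorname{Ass}_R(T)=\operatorname{Ass}_R(\operatorname{Hom}_R(R/\frak a,T))$, since a prime $\frak p=\operatorname{Ann}_R(t)\in\operatorname{Ass}_R(T)$ satisfies $\frak a^n t=0$ for some $n$, hence $\frak a\subseteq\frak p$, hence $\frak a t=0$ and $t\in(0:_T\frak a)$. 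Applying this with $T=H^t_{\frak a}(M)$ gives finiteness of $\operatorname{Ass}_R(H^t_{\frak a}(M))$.

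Before the induction I would record the routine permanence properties of the ${\rm FSF}$ class that are used throughout: it is closed under submodules, quotients, and extensions (the Noetherian hypothesis on $R$ ensures that the intersection of a submodule with a finitely generated submodule is again finitely generated); and $\operatorname{Ext}^j_R(R/\frak a,N)$ is ${\rm FSF}$ whenever $N$ is, since for a finitely generated $N'\subseteq N$ with $\operatorname{Supp}_R(N/N')$ finite, $\operatorname{Ext}^j_R(R/\frak a,N')$ is finitely generated while $\operatorname{Ext}^j_R(R/\frak a,N/N')$ has finite support, and the long exact sequence then presents $\operatorname{Ext}^j_R(R/\frak a,N)$ as an extension of the two.

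The base case $t=0$ is clear, since $\operatorname{Hom}_R(R/\frak a,H^0_{\frak a}(M))=(0:_M\frak a)=\operatorname{Hom}_R(R/\frak a,M)$ is finitely generated. For the inductive step assume $t\geq 1$. Replacing $M$ by $M/\Gamma_{\frak a}(M)$ leaves $H^i_{\frak a}(M)$ unchanged for $i\geq 1$ and makes $H^0_{\frak a}(M)=0$, so neither the hypotheses nor the desired conclusion is affected, and we may assume $\Gamma_{\frak a}(M)=0$. Then no associated prime of $M$ contains $\frak a$, so prime avoidance provides an $M$-regular element $x\in\frak a$. The exact sequence $0\to M\xrightarrow{x}M\to M/xM\to 0$ yields, for every $i$, the short exact sequence
\[
0\longrightarrow H^i_{\frak a}(M)/xH^i_{\frak a}(M)\longrightarrow H^i_{\frak a}(M/xM)\longrightarrow\bigl(0:_{H^{i+1}_{\frak a}(M)}x\bigr)\longrightarrow 0 .
\]
For $i<t-1$ its two outer terms are a quotient of the ${\rm FSF}$ module $H^i_{\frak a}(M)$ and a submodule of the ${\rm FSF}$ module $H^{i+1}_{\frak a}(M)$, hence $H^i_{\frak a}(M/xM)$ is ${\rm FSF}$; the induction hypothesis applied to $M/xM$ then gives that $\operatorname{Hom}_R(R/\frak a,H^{t-1}_{\frak a}(M/xM))$ is ${\rm FSF}$.

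To finish, I would apply $\operatorname{Hom}_R(R/\frak a,-)$ to the displayed sequence with $i=t-1$. Since $x\in\frak a$, we have $\operatorname{Hom}_R\bigl(R/\frak a,(0:_{H^t_{\frak a}(M)}x)\bigr)=(0:_{H^t_{\frak a}(M)}\frak a)=\operatorname{Hom}_R(R/\frak a,H^t_{\frak a}(M))$, and the long exact sequence exhibits $\operatorname{Hom}_R(R/\frak a,H^t_{\frak a}(M))$ as an extension of a submodule of $\operatorname{Ext}^1_R\bigl(R/\frak a,\,H^{t-1}_{\frak a}(M)/xH^{t-1}_{\frak a}(M)\bigr)$ by a quotient of $\operatorname{Hom}_R(R/\frak a,H^{t-1}_{\frak a}(M/xM))$. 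The latter is ${\rm FSF}$ by the previous step, and the former is ${\rm FSF}$ because $H^{t-1}_{\frak a}(M)/xH^{t-1}_{\frak a}(M)$ is a quotient of the ${\rm FSF}$ module $H^{t-1}_{\frak a}(M)$ and $\operatorname{Ext}^1_R(R/\frak a,-)$ preserves ${\rm FSF}$; hence $\operatorname{Hom}_R(R/\frak a,H^t_{\frak a}(M))$ is ${\rm FSF}$, closing the induction. I expect the main obstacle to be recognising that the induction must be run with this strengthened conclusion: the module $(0:_{H^t_{\frak a}(M)}x)$ is only a \emph{quotient} of $H^{t-1}_{\frak a}(M/xM)$, so mere finiteness of $\operatorname{Ass}_R(H^{t-1}_{\frak a}(M/xM))$ would carry no information, whereas the ${\rm FSF}$ property passes to quotients and, combined with its preservation by $\operatorname{Ext}^1_R(R/\frak a,-)$, absorbs the connecting homomorphism.
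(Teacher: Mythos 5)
The paper does not actually prove this theorem: it is quoted in the Introduction as a result of Hung Quy \cite{} (the $[\ref{hung}]$ reference), and the paper's own contribution begins afterwards with the characterisation of weakly Laskerian modules as ${\rm FSF}$ modules. So there is no proof in the paper to compare against; I can only assess your argument on its own terms and against the cited sources.

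Your proof is correct, and it is the standard argument for this kind of finiteness theorem. The reductions are all sound: ${\rm FSF}$ modules have finitely many associated primes, $\operatorname{Ass}(T)=\operatorname{Ass}(\operatorname{Hom}_R(R/\frak a,T))$ for an $\frak a$-torsion module $T$, the class of ${\rm FSF}$ modules over a Noetherian ring is closed under submodules, quotients, and extensions, and $\operatorname{Ext}^j_R(R/\frak a,-)$ preserves it. The induction replaces $M$ by $M/\Gamma_{\frak a}(M)$, picks an $M$-regular $x\in\frak a$, and uses the short exact sequences
\[
0\to H^i_{\frak a}(M)/xH^i_{\frak a}(M)\to H^i_{\frak a}(M/xM)\to \bigl(0:_{H^{i+1}_{\frak a}(M)}x\bigr)\to 0
\]
coming from the local cohomology long exact sequence; your bookkeeping of which indices are covered by the hypothesis (the outer terms for $i<t-1$, plus $H^{t-1}_{\frak a}(M)$ itself for the $\operatorname{Ext}^1$ term) is accurate, and the identification $\operatorname{Hom}_R(R/\frak a,(0:_{H^t}x))=(0:_{H^t}\frak a)$ for $x\in\frak a$ is the right final twist. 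You have, in effect, reconstructed the ${\rm FSF}$-version of Brodmann--Lashgari Faghani's argument (Proposition 2.1 of the paper the authors point to with ``see also''), with ${\rm FSF}$ substituted for ``finitely generated'' throughout; this is, to the best of my knowledge, exactly the structure of Hung Quy's proof. Your closing remark about why the induction must be run with the strengthened conclusion --- because $(0:_{H^t_{\frak a}(M)}x)$ is only a quotient of $H^{t-1}_{\frak a}(M/xM)$, so finiteness of associated primes alone does not transfer --- identifies precisely the crux.
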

See also [\ref{Brodmannfaghani}, Proposition 2.1].  The main purpose
of this paper is to prove that over a Noetherian ring $R$, for an
$R$-module the property of being weakly Laskerian or ${\rm FSF}$ are
the same. By using this characterization we prove that the property
of being weakly Laskerian ascends by tensoring under the completion
and descends by finite integral extensions of local ring
homomorphisms. More precisely, it is proved that:\\
\begin{thm}
Let $R$ be a Noetherian ring and $M$ a nonzero $R$-module. The
following statements are equivalent:
\begin{enumerate}
  \item $M$ is a weakly Laskerian module;
  \item M is an $\emph{{\rm FSF}}$ module.
\end{enumerate}
\end{thm}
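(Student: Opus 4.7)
The equivalence has an easy direction, $(2) \Rightarrow (1)$, which I would dispatch by a short exact sequence argument, and the substantive direction $(1) \Rightarrow (2)$, which I would attack by contradiction.

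\textbf{Easy direction.} Suppose $M$ is ${\rm FSF}$, with finitely generated $N \subseteq M$ such that ${\rm Supp}(M/N)$ is finite. For any submodule $L \subseteq M$, I would use the short exact sequence
\[
0 \longrightarrow (N+L)/L \longrightarrow M/L \longrightarrow M/(N+L) \longrightarrow 0.
\]
Here $(N+L)/L \cong N/(N \cap L)$ is a quotient of the finitely generated module $N$, hence finitely generated, so its set of associated primes is finite by the classical theorem. The right-hand term is a quotient of $M/N$, so its support (and \emph{a fortiori} its set of associated primes) is contained in the finite set ${\rm Supp}(M/N)$. Since ${\rm Ass}(M/L)$ is contained in the union of these two finite sets, it is finite.

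\textbf{Hard direction.} Assume $M$ is weakly Laskerian; I would argue that $M$ is ${\rm FSF}$ by contradiction. Suppose no finitely generated $N \subseteq M$ has ${\rm Supp}(M/N)$ finite. Note first that being non-${\rm FSF}$ is inherited by quotients $M/N'$ with $N'$ finitely generated: if some finitely generated $\overline{N} \subseteq M/N'$ had $|{\rm Supp}((M/N')/\overline{N})|<\infty$, lifting $\overline{N}$ to a finitely generated $N \supseteq N'$ would make $M$ ${\rm FSF}$. Since ${\rm Supp}(M/N')$ is therefore always infinite while ${\rm Ass}(M/N')$ is always finite, and every prime in the support contains a minimal prime which lies in ${\rm Ass}$, a pigeonhole argument produces a minimal (hence associated) prime $\mathfrak{q} \in {\rm Ass}(M/N')$ with $V(\mathfrak{q}) \cap {\rm Supp}(M/N')$ infinite. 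Using this, I would build inductively a strictly ascending chain of finitely generated submodules $0=N_0 \subsetneq N_1 \subsetneq N_2 \subsetneq \cdots$, and choose elements $z_i \in M$ whose images in $M/N_{i-1}$ have annihilator a prime $\mathfrak{p}_i$, arranging for the $\mathfrak{p}_i$'s to be pairwise distinct (using that at each stage there are infinitely many candidate primes in the support but only finitely many have appeared).

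\textbf{Passage to the limit and the main obstacle.} Setting $L = \bigcup_i N_i$, I would argue that each $\mathfrak{p}_i$ lies in ${\rm Ass}(M/L)$, which would contradict weak Laskerianity of $M$. For a fixed $i$, the annihilator of the image of $z_i$ in $M/L$ equals $\bigcup_{j \geq i} \{r \in R : r z_i \in N_j\}$; this is an ascending union of ideals of the Noetherian ring $R$, so it stabilizes. The main technical obstacle is ensuring that this stabilized annihilator is exactly $\mathfrak{p}_i$ and not some strictly larger ideal: the construction of $N_{j}$ for $j > i$ must be carried out so as not to enlarge the annihilator of $z_i$ modulo $N_j$. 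I would address this by, at stage $j$, selecting $z_{j+1}$ to lie outside the submodule $\{x \in M : \text{some } r \notin \mathfrak{p}_i \text{ with } r x \in N_j\}$ for each previous $i \leq j$. Combined with the distinctness of the $\mathfrak{p}_i$, this would yield infinitely many associated primes of $M/L$, contradicting that $M$ is weakly Laskerian and completing the proof.
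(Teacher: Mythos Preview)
Your $(2)\Rightarrow(1)$ is exactly the paper's argument. For $(1)\Rightarrow(2)$ your overall plan---build an ascending chain of finitely generated submodules and pass to the union $L$ to produce a quotient with infinitely many associated primes---is also the paper's strategy, but your execution has two genuine gaps at precisely the point you flag as ``the main obstacle.'' First, the sentence ``there are infinitely many candidate primes in the support but only finitely many have appeared'' does not give you a \emph{new} $\mathfrak p_{j+1}\in{\rm Ass}(M/N_j)$: the associated primes of $M/N_j$ form a finite set and may well be contained in $\{\mathfrak p_1,\dots,\mathfrak p_j\}$, even though the support is infinite. Second, your proposed control on annihilators---choosing $z_{j+1}$ outside $\{x: r x\in N_j \text{ for some } r\notin\mathfrak p_i\}$---is a condition on $r z_{j+1}$, whereas what you must prevent is $r z_i\in N_{j+1}=N_j+\mathfrak p_{j+1}z_{j+1}$ for $r\notin\mathfrak p_i$; these are not the same, and nothing in your condition rules out $r\bar z_i=s\bar z_{j+1}$ in $M/N_j$ with $s\in\mathfrak p_i$.

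The paper resolves both issues with one device: it stratifies by dimension, working inside $A(R,k)=\{\mathfrak p:\dim R/\mathfrak p=k\}$ for a fixed $k$, so that all chosen primes are mutually incomparable. It then picks $\mathfrak p_{j+1}$ from the (infinite) set ${\rm Supp}(M/T_j)\cap A(R,k)$ rather than from ${\rm Ass}$, takes any $y$ with annihilator $\subseteq\mathfrak p_{j+1}$, and---this is the key lemma---uses the direct-sum decomposition $E=\Gamma_I(E)\oplus E'$ of the injective hull (with $I=\mathfrak p_1\cdots\mathfrak p_j$) to replace $y$ by an element $x_{j+1}$ satisfying $R\bar x_{j+1}\cap\Gamma_I(M/T_j)=0$. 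Since each earlier $\bar x_i$ lies in $\Gamma_I(M/T_j)$, adding $\mathfrak p_{j+1}x_{j+1}$ to $T_j$ cannot capture any $r\bar x_i$ with $r\notin\mathfrak p_i$; and incomparability is exactly what guarantees that the annihilator of the new element stays inside $\mathfrak p_{j+1}$. Finally, the paper observes that one only needs $k=0$ and $k=1$: taking $N=N_0+N_1$, any maximal $\mathfrak p\in{\rm Supp}(M/N)$ outside the two finite sets would have $\dim R/\mathfrak p\ge 2$, hence infinitely many primes above it, all forced into the finite union---a contradiction. Your sketch is missing both the incomparability idea and the $\Gamma_I$-splitting that makes the induction go through.
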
\label{2}
\begin{thm}
Let $(R, \frak m)$ be a Noetherian local ring and $M$ a weakly
Laskerian $R$-module. Then $M\otimes_RR^{*}$ is a weakly Laskerian
$R^{*}$-module, where $R^{*}$ denotes the $\frak m$-adic completion
of $R$.
\end{thm}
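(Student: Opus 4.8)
The strategy is to reduce everything to the equivalence of ${\rm FSF}$ and weakly Laskerian modules proved above, together with a one-dimensionality observation about supports. Since $R$ is Noetherian and $M$ is weakly Laskerian, that equivalence shows $M$ is ${\rm FSF}$, so there is a finitely generated submodule $N\subseteq M$ for which $L:=M/N$ has finite support ${\rm Supp}_R(L)$. As $R^{*}$ is again a Noetherian ring, the same equivalence applies over $R^{*}$, so it is enough to prove that $M\otimes_R R^{*}$ is an ${\rm FSF}$ $R^{*}$-module (we may assume $M\neq 0$, and then $M\otimes_R R^{*}\neq 0$ by faithful flatness). Applying the exact functor $-\otimes_R R^{*}$ to $0\to N\to M\to L\to 0$ yields $0\to N\otimes_R R^{*}\to M\otimes_R R^{*}\to L\otimes_R R^{*}\to 0$ with $N\otimes_R R^{*}$ finitely generated over $R^{*}$; hence the whole theorem reduces to showing that ${\rm Supp}_{R^{*}}(L\otimes_R R^{*})$ is a finite set.

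For the support estimate I would argue as follows. The set ${\rm Supp}_R(L)$ is closed under specialization, so it equals $V(\frak b)$ for the ideal $\frak b:=\bigcap_{\frak p\in{\rm Supp}_R(L)}\frak p$. For any $\frak q\in{\rm Spec}(R^{*})$ with $\frak p:=\frak q\cap R$, flat base change gives $(L\otimes_R R^{*})_{\frak q}\cong L_{\frak p}\otimes_{R_{\frak p}}R^{*}_{\frak q}$, which vanishes as soon as $\frak p\notin{\rm Supp}_R(L)=V(\frak b)$. Therefore ${\rm Supp}_{R^{*}}(L\otimes_R R^{*})$ is contained in the preimage of $V(\frak b)$ under ${\rm Spec}(R^{*})\to{\rm Spec}(R)$, that is, in $V(\frak b R^{*})={\rm Spec}(R^{*}/\frak b R^{*})$, and $R^{*}/\frak b R^{*}$ is exactly the $\frak m$-adic completion of the finitely generated $R$-module $R/\frak b$.

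It then remains to see that ${\rm Spec}(R^{*}/\frak b R^{*})$ is finite, and this is the heart of the matter. By construction $V(\frak b)={\rm Spec}(R/\frak b)$ is a \emph{finite} set; but a Noetherian ring with only finitely many primes has dimension at most $1$, since a prime of height $\geq 2$ would, by Krull's principal ideal theorem together with prime avoidance, lie above infinitely many height-one primes. Thus $R/\frak b$ is a Noetherian local ring of dimension $\leq 1$, and so is its completion $R^{*}/\frak b R^{*}$, since completion preserves the dimension of a Noetherian local ring. Finally, in a Noetherian local ring of dimension $\leq 1$ every prime is either a minimal prime — of which there are finitely many — or the maximal ideal; hence ${\rm Spec}(R^{*}/\frak b R^{*})$ is finite, which finishes the argument.

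The main obstacle — and the reason this is not a purely formal flat-base-change computation — is the finiteness of ${\rm Supp}_{R^{*}}(L\otimes_R R^{*})$: the formal fibres of $R\to R^{*}$ over individual (even generic) primes of $R$ can be infinite, so passing to the preimage of the finite set ${\rm Supp}_R(L)$ does not by itself give a finite set. The device of repackaging that finite set as ${\rm Spec}(R/\frak b)$ and using that a Noetherian ring with finite spectrum has dimension at most one is what renders the formal fibres harmless; everything else is routine.
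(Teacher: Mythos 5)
Your proof is correct and follows essentially the same route as the paper: use Theorem \ref{3} to pass to the FSF picture, set $J=\bigcap_{\mathfrak p\in\operatorname{Supp}(M/N)}\mathfrak p$ so that $\operatorname{Supp}_R(M/N)=V(J)$ with $\dim R/J\le 1$, note $\operatorname{Supp}_{R^{*}}((M/N)\otimes_R R^{*})\subseteq V(JR^{*})$, and conclude from $\dim R^{*}/JR^{*}\le 1$ and locality that $V(JR^{*})$ is finite. You merely spell out two steps the paper takes for granted (that a Noetherian ring with finite spectrum has dimension at most one, and that completion preserves dimension), which is a welcome clarification but not a change of strategy.
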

\begin{thm}
Let $(R, \frak m) \rightarrow (S, \frak n)$ be a Noetherian local
ring homomorphism in which $S$ is a finite $R$-module. Then every
weakly
Laskerian $S$-module is a weakly Laskerian $R$-module.\\
\end{thm}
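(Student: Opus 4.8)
The plan is to transport the statement to the ${\rm FSF}$ side of the equivalence proved above, where the finiteness condition (finiteness of a support) passes along the ring homomorphism $f\colon R\to S$ transparently; arguing directly from the definition of weakly Laskerian modules is awkward, since an $R$-submodule of $M$ need not be an $S$-submodule. First, if $M=0$ there is nothing to prove, so assume $M\neq 0$. Note that $S$, being a finite module over the Noetherian ring $R$, is itself Noetherian, so the equivalence of the weakly Laskerian and ${\rm FSF}$ classes applies over both $R$ and $S$.

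Next I would apply that equivalence over $S$: since $M$ is a weakly Laskerian $S$-module it is an ${\rm FSF}$ $S$-module, so there is a finitely generated $S$-submodule $N$ of $M$ with ${\rm Supp}_S(M/N)$ finite. Because $S$ is finitely generated as an $R$-module and $N$ is finitely generated over $S$, the module $N$ is finitely generated over $R$; and it is an $R$-submodule of $M$. Hence, once we know that ${\rm Supp}_R(M/N)$ is finite, $M$ is an ${\rm FSF}$ $R$-module, and the equivalence applied over $R$ then gives that $M$ is weakly Laskerian over $R$, as wanted.

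So the crux is to show that finiteness of ${\rm Supp}_S(M/N)$ forces finiteness of ${\rm Supp}_R(M/N)$. Write ${\rm Supp}_S(M/N)=\{\frak q_1,\dots,\frak q_k\}$ and put $\frak p_i=f^{-1}(\frak q_i)$; I claim ${\rm Supp}_R(M/N)\subseteq\{\frak p_1,\dots,\frak p_k\}$. Let $\frak p\in{\rm Spec}(R)$ with $(M/N)_{\frak p}\neq 0$. Identifying $(M/N)_{\frak p}$ with $(M/N)\otimes_S S_{\frak p}$, where $S_{\frak p}=(R\setminus\frak p)^{-1}S$ is a module-finite, hence integral, extension of the local ring $R_{\frak p}$, choose a maximal ideal $\frak M$ of $S_{\frak p}$ in the support of this nonzero $S_{\frak p}$-module. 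By integrality over the local ring $R_{\frak p}$, the ideal $\frak M$ lies over $\frak p R_{\frak p}$, so it corresponds to a prime $\frak q$ of $S$ with $f^{-1}(\frak q)=\frak p$ and $(M/N)_{\frak q}\neq 0$; thus $\frak q\in{\rm Supp}_S(M/N)$, forcing $\frak q=\frak q_i$ and $\frak p=\frak p_i$ for some $i$. This proves the claim, and the theorem follows.

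The main obstacle is exactly this support transfer; it is the reason for going through the ${\rm FSF}$ picture instead of reasoning with associated primes of quotients directly. Once it is established---and it reduces to two elementary facts, that a nonzero module over a ring has a maximal ideal in its support and that maximal ideals contract to maximal ideals along an integral extension of a local ring---the argument closes immediately.
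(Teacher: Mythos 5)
Your proof is correct and follows the same overall strategy as the paper: pass to the ${\rm FSF}$ characterization over $S$, obtain a finitely generated $S$-submodule $N$ with ${\rm Supp}_S(M/N)$ finite, observe that $N$ is also finitely generated over $R$, show that ${\rm Supp}_R(M/N)$ is finite, and conclude via the ${\rm FSF}$ characterization over $R$. The one place where you diverge is the support-transfer step, and it is a genuine difference. The paper argues indirectly through dimension: it sets $J$ to be the intersection of the finitely many primes in ${\rm Supp}_S(M/N)$, deduces $\dim(S/J)\leq 1$ from finiteness of $V(J)$ over the local ring $S$, passes to $\dim(R/J\cap R)\leq 1$ via integrality, and then invokes locality of $R$ to conclude $V(J\cap R)$ is finite. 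Your argument is prime-by-prime: localize the $S$-module $M/N$ at $\frak p\in{\rm Supp}_R(M/N)$, pick a maximal ideal of the semi-local ring $S_{\frak p}$ in the support of the resulting nonzero module, and use that maximal ideals of an integral extension of a local ring contract to the maximal ideal. This shows directly that every prime in ${\rm Supp}_R(M/N)$ is the contraction of a prime in ${\rm Supp}_S(M/N)$, so ${\rm Supp}_R(M/N)$ has at most as many elements. Notably, your argument nowhere uses that $R$ and $S$ are local; it establishes the stronger and cleaner fact that, for a finite ring map and an arbitrary $S$-module $T$, the $R$-support of $T$ is the image of its $S$-support under contraction. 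So your route is both more elementary in its ingredients (lying over for integral extensions, plus the fact that a nonzero module has a maximal ideal in its support) and strictly more general in scope than the paper's, which leans on Krull-dimension bookkeeping that only closes up under the local hypothesis.
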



\section{\textbf{Preliminaries}}
\vspace{0.5cm}In this section we bring some technical results, which
will be used later.\\

The following Lemma is a well known fact [\ref{brodmannsharp},
Proposition 2.1.4], but the proof which is given here is elementary
and included for completeness.\\

\begin{defn}
Let $R$ be ring and $I$ an ideal of $R$. For an $R$-module $M$ the
$I$-torsion submodule of $M$ is denoted by $\Gamma_{_{I}}(M)$ and
defines as
\[\Gamma_{_{I}}(M)=\{x\in M\, |\, \exists n \in
\Bbb{N}\,\,\mbox{such that}\,\, I^nx=0\}.
\]
\end{defn}\label{l1}

\begin{lem} Let $R$ be a Noetherian ring, and $I$ an ideal of $R$.
If $E$ is an injective $R$-module, then so is $\Gamma_{_{I}}(E)$ an
injective $R$-module.
\end{lem}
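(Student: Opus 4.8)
The plan is to verify Baer's criterion for $\Gamma_{_{I}}(E)$: it is enough to show that every $R$-homomorphism $f\colon J\to\Gamma_{_{I}}(E)$ from an ideal $J$ of $R$ extends to a homomorphism $R\to\Gamma_{_{I}}(E)$. So fix such $J$ and $f$. Since $R$ is Noetherian, $J=(a_1,\dots,a_k)$ is finitely generated, and each $f(a_i)$ is annihilated by a power of $I$; taking the largest of these finitely many exponents gives an $n$ with $I^nf(J)=0$, i.e. $f(J)\subseteq(0:_E I^n)$.

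Composing $f$ with the inclusion $\Gamma_{_{I}}(E)\hookrightarrow E$ and using that $E$ is injective, extend it to $g\colon R\to E$ and put $e:=g(1)$, so that $g(a)=ae=f(a)$ for all $a\in J$. Since $c\,a_i e=c\,f(a_i)=0$ for every $c\in I^n$ and every $i$, the element $e$ is killed by the product ideal $I^nJ$, that is, $e\in(0:_E I^nJ)$. It now suffices to show that $e$ lies in $\Gamma_{_{I}}(E)+(0:_E J)$: writing $e=e'+e_1$ with $e'\in\Gamma_{_{I}}(E)$ and $Je_1=0$, the map $r\mapsto re'$ takes values in the submodule $\Gamma_{_{I}}(E)$ and sends every $a\in J$ to $ae'=ae=f(a)$, so it is the required extension of $f$.

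To prove the remaining claim I would invoke the Artin--Rees lemma for the ideal $I$ and the submodule $J$ of $R$: there is an integer $m\ge n$ with $I^m\cap J\subseteq I^nJ$, and therefore $e\in(0:_E I^m\cap J)$. Applying $\operatorname{Hom}_R(-,E)$ to the short exact sequence
\[
0\longrightarrow R/(I^m\cap J)\longrightarrow R/I^m\oplus R/J\longrightarrow R/(I^m+J)\longrightarrow 0
\]
and using that $E$ is injective yields $(0:_E I^m\cap J)=(0:_E I^m)+(0:_E J)$; since $(0:_E I^m)\subseteq\Gamma_{_{I}}(E)$, this gives $e\in\Gamma_{_{I}}(E)+(0:_E J)$, as wanted.

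I expect the main obstacle to be exactly the step ``$e$ killed by the product $I^nJ$'' $\Rightarrow$ ``$e$ killed by the intersection $I^m\cap J$'': the superficially plausible identity $(0:_E\frak a\frak b)=(0:_E\frak a)+(0:_E\frak b)$ is false in general, so one cannot avoid passing through intersections, and the discrepancy between a product and an intersection of ideals is precisely where the Noetherian hypothesis re-enters, via Artin--Rees. As an alternative one could avoid Baer's criterion and use the structure theory of injectives over a Noetherian ring: write $E$ as a direct sum of indecomposable injectives $E(R/\frak p)$, use that $\Gamma_{_{I}}$ commutes with arbitrary direct sums, and observe that $\Gamma_{_{I}}(E(R/\frak p))$ equals $E(R/\frak p)$ if $I\subseteq\frak p$ and is $0$ otherwise --- because every element of $E(R/\frak p)$ is annihilated by a power of $\frak p$; a direct sum of injective modules over a Noetherian ring being injective then finishes the proof.
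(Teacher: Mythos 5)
Your argument is correct, and it is genuinely different from the paper's. The paper works with the injective envelope $E_1$ of $\Gamma_{_I}(E)$ inside $E$ and shows $E_1=\Gamma_{_I}(E)$: assuming not, it chooses $x\in E_1\setminus\Gamma_{_I}(E)$ whose annihilator $\frak p=(0:_Rx)$ is maximal (this is where Noetherianity enters there), proves $\frak p$ is prime, and then uses essentiality of $E_1\supseteq\Gamma_{_I}(E)$ to force $I\subseteq\frak p$, hence $x\in\Gamma_{_I}(E)$, a contradiction. You instead verify Baer's criterion directly: after extending $f$ to $g\colon R\to E$ with $e=g(1)$, you observe $I^nJe=0$, use Artin--Rees to replace the product $I^nJ$ by an intersection $I^m\cap J$, and then use injectivity of $E$ (via the Mayer--Vietoris sequence of ideals) to split $(0:_EI^m\cap J)=(0:_EI^m)+(0:_EJ)$, which lands $e$ in $\Gamma_{_I}(E)+(0:_EJ)$ and produces the required extension. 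Both proofs are sound and both use the Noetherian hypothesis essentially, but at different points: the paper via existence of maximal annihilators, you via finite generation of $J$ and Artin--Rees. Your diagnosis that the real obstacle is the gap between $(0:_E\frak a\frak b)$ and $(0:_E\frak a\cap\frak b)$ is exactly right, and Artin--Rees is the standard bridge; the alternative you sketch via the Matlis decomposition $E=\bigoplus E(R/\frak p)$ and the dichotomy $\Gamma_{_I}(E(R/\frak p))\in\{0,E(R/\frak p)\}$ is also correct and is perhaps the most conceptual route, at the cost of invoking the full structure theory of injectives over Noetherian rings. The paper's proof has the modest advantage of being self-contained and not requiring Baer, Artin--Rees, or Matlis theory; yours is shorter once those tools are available.
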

\begin{proof}
Let $E_1$ be the injective envelope of $\Gamma_{_{I}}(E)$. It is
enough to prove that $E_1=\Gamma_{_{I}}(E)$. By the way of
contradiction, assume that $E_1$ properly contains
$\Gamma_{_{I}}(E)$ and let
\[
\mathcal{S}=\{(0:_Re)\,|\,e\in E_1\setminus \Gamma_{_{I}}(E)\}.
\]
$R$ being a Noetherian ring implies that $\mathcal{S}$ has, at
least, a maximal element, $\p=(0:_Rx)$ say. We claim that $\p$ is a
prime ideal. Assume the contrary. Then, for some $a, b \in R$,  we
have $ab\in \p$, while neither $a$ nor $b$ does not belong to $\p$.
Since $\p=0:_Rx$ is a proper subset of $(0:_Rbx)$ and $\p$ is a
maximal element of $\mathcal{S}$, then $bx\in \Gamma_{_{I}}(E)$.
Therefore, there exists a natural integer $t$ such that $I^tbx=0$.
If $I^t=(c_1,\dots,c_n)$, then $bc_kx=0\,\,\mbox{for all}\,\,
k=1,\dots, n$. Again, as discussed above, by maximality of $\p$ for
all $k=1,\dots, n$ there exists an integer $t_k\in \Bbb{N}$ such
that $I^{t_{k}}c_kx=0$. Set $s:=\max_{_{1\leq i\leq n}}t_i$. It is
easy to see that $I^{s+t}x=0$, which contradicts the assumption that
$x$ does not belong to $\Gamma_{_{I}}(E)$. Therefore, $\p=0:_Rx$ is
a prime ideal of $R$. Since $E_1$ is an essential extension of
$\Gamma_{_{I}}(E)$, then there exists $r\in R$ such that $0\neq
rx\in \Gamma_{_{I}}(E)$. As $r$ does not belong to $\p$ and $\p$ is
a prime ideal, then $I\subseteq \p$ which contradicts the fact that
\[
x\notin \Gamma_{_{I}}(E)=\Gamma_{_{I}}(E_1)\subseteq E_1\subseteq E,
\]
where the last containment is true by injectivity of $E$.\\
\end{proof}

In what follows the next theorem plays an important role. Before
stating it, to simplify our expressions, we give a definition.\\

\begin{defn}
Let $R$ be a ring an $k$ a nonnegative integer. We define
\[
A(R, k):=\{\p\in {\rm Spec}\,R\,\,|\,\,{\rm dim}(R/\p)=k\}, \] where
${\rm {Spec}}\,R$ denotes the prime spectrum of $R$.

\end{defn}

\begin{thm}\label{1}
Assume that $R$ be a Noetherian ring, $k$ a nonnegative integer and
let $M$ be an  $R$-module such that the intersection ${\rm
Supp}\,(M)\bigcap A(R, k)$ is an infinite set. If $\p_1,\dots,\p_n$
be prime ideals of $R$ such that
\[
\{\p_1,\dots,\p_n\}\subseteq {\rm Ass}(M)\bigcap A(R, k),
\]
then there exists a prime ideal, say $\p_{n+1}$, and a nonzero
elements of $M$,  $x$ say, satisfing the following conditions:
\begin{enumerate}
  \item  $\p_{n+1}\in {\rm Supp}\,(M)\bigcap A(R, k)\setminus
\{\p_1,\dots,\p_n\}$,
  \item  $(0:_Rx)\subseteq \p_{n+1}$,
  \item  $Rx\bigcap \Gamma_{\p_1...\p_n}(M)=0$.
\end{enumerate}
\end{thm}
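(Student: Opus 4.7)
The plan is to build $\p_{n+1}$ and $x$ directly, without needing the injective-hull lemma proved above. Since $\{\p_1,\dots,\p_n\}$ is finite while the set ${\rm Supp}(M)\cap A(R,k)$ is infinite, I can simply pick any
$$\p_{n+1}\in {\rm Supp}(M)\cap A(R,k)\setminus\{\p_1,\dots,\p_n\},$$
which at once delivers condition (1).

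The key algebraic observation is that two distinct primes of $A(R,k)$ must be incomparable: if $\p\subsetneq\p'$, any maximal chain of length $\dim R/\p'$ starting at $\p'$ extends, upon prepending $\p$, to a strictly longer chain starting at $\p$, forcing $\dim R/\p>\dim R/\p'$. Applied to $\p_{n+1}$ versus each $\p_i$, this gives $\p_i\not\subseteq\p_{n+1}$ for all $i$, and primeness of $\p_{n+1}$ upgrades this to
$$I:=\p_1\cdots\p_n\not\subseteq\p_{n+1}.$$

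To construct $x$, I first record the (standard but worth stating) fact that over a Noetherian ring each point of the support of any module sits above an associated prime. Starting from $M_{\p_{n+1}}\neq 0$ one picks $y\in M$ with $y/1\neq 0$ in $M_{\p_{n+1}}$ and applies the finitely generated version to the cyclic submodule $Ry\cong R/(0:_Ry)$; this produces $\q\in {\rm Ass}(M)$ with $\q\subseteq\p_{n+1}$. Choosing any $y\in M$ with $(0:_Ry)=\q$ and setting $x:=y$ makes condition (2) transparent, since $(0:_Rx)=\q\subseteq\p_{n+1}$.

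Condition (3) then drops out. Since $\q\subseteq\p_{n+1}$ and $I\not\subseteq\p_{n+1}$, also $I\not\subseteq\q$. If $ry\in Rx\cap\Gamma_I(M)$, then $I^try=0$ for some $t$, so $I^tr\subseteq\q$; by primeness of $\q$ either $r\in\q$ (whence $ry=0$) or $I^t\subseteq\q$ and hence $I\subseteq\q$, a contradiction. The only mildly delicate point in the whole argument is the support-versus-associated-primes reduction used to pick $\q$, and that is a short finitely generated argument rather than a serious obstacle; everything else is routine prime-avoidance bookkeeping.
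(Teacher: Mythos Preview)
Your proof is correct and is in fact more elementary than the paper's. Having chosen $\p_{n+1}$ and an element $y\in M$ with $(0:_Ry)\subseteq\p_{n+1}$, the paper invokes the lemma that $\Gamma_I$ preserves injectives to split the injective hull of $M$ as $\Gamma_I(E_R(M))\oplus E'$ (here $I=\p_1\cdots\p_n$), decomposes $y=a+b$ accordingly, and then extracts $x$ from the finitely generated submodule $I^t y=I^t b\subseteq M$; condition~(3) follows because $Rb\cap\Gamma_I(M)=0$. You bypass injective hulls entirely: by descending to an associated prime $\q\subseteq\p_{n+1}$ and choosing $x$ with $(0:_Rx)=\q$, you arrange $Rx\cong R/\q$, which is visibly $I$-torsion-free since $I\not\subseteq\q$. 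The paper's route motivates and actually uses the preparatory lemma on injectives; yours is shorter and needs nothing beyond the standard fact that, over a Noetherian ring, every prime in the support contains an associated prime. The only cosmetic blemish is that you use the letter $y$ for two different elements in quick succession.
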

\begin{proof}
Since the intersection ${\rm Supp}\,(M)\bigcap A(R, k)$ has
infinitely many elements, then it is possible to find a prime ideal
$\p_{n+1}\in A(R, k)\setminus \{ \p_1,\dots,\p_n \}$ such that
$0:_Ry\subseteq \p_{n+1}$ for some $y\in M$. By Lemma [\ref{l1}],
$y=a+b$, in which $a, b \in E_R(M)$,
\[
Rb\cap \Gamma_{\p_1...\p_n}(M)=0\,\,\, \mbox{and}\,\,\,
(\p_1...\p_n)^ta=0\,\,\,\,\,\,\,\,\,\,\,\,\,\,\,\,\,(\dag)
\]
for some natural integer $t$. Therefore,
\[
(\p_1...\p_n)^ty=(\p_1...\p_n)^tb.
\]
We claim that $0:_R(\p_1...\p_n)^tb\subseteq \p_{n+1}$. Assume the
contrary, and let $s\in R\setminus \p_{n+1}$ such that
$s(\p_1...\p_n)^ty=s(\p_1...\p_n)^tb=0$. Since $\p_{n+1}$ is  a
prime ideal then $\p_j\subseteq \p_{n+1}$, for some $j=1,\dots, n$.
In view of the fact that
\[
{\rm dim}\,R/\p_j=k={\rm dim}\, R/\p_{n+1},
\]
 we conclude that $\p_j=\p_{n+1}$
which is the desired contradiction. Therefore,
\[
0:_R(\p_1\dots\p_n)^ty\subseteq \p_{n+1}.
\]
Since $(\p_1\dots \p_n)^tb$ is a finitely generated $R$-module then
$0:_Rx\subseteq \p_{n+1}$, for some $x\in (\p_1\dots \p_n)^tb$. On
the other hand,
\[
Rx\cap \Gamma_{\p_1...\p_n}(M)\subseteq (\p_1\dots \p_n)^tb\cap
\Gamma_{\p_1...\p_n}(M)\subseteq Rb\cap \Gamma_{\p_1...\p_n}(M)=0,
\]
where the last equality is true by ($\dag$), and the proof is
completed.
\end{proof}

\begin{thm}
Assume that $R$ is a Noetherian ring, $k$ a nonnegative integer and
let $M$ be an $R$-module such that for all finitely generated
submodule $N$ of $M$ the intersection ${\rm Supp}\,(M/N)\cap A(R,
k)$ is not a finite set. Then there exists a submodule $L$ of $M$
such that the intersection ${\rm Ass}\,(M/L)\cap A(R, k)$is an
infinite set.
\end{thm}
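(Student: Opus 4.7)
The plan is to construct inductively a sequence of distinct primes $\p_1,\p_2,\ldots \in A(R,k)$, elements $y_1,y_2,\ldots\in M$, and an ascending chain $0 = L_0\subseteq L_1\subseteq\cdots$ of finitely generated submodules of $M$ maintaining the invariant
\[
(0:_R y_i + L_n) = \p_i \qquad\text{for all }\ 1 \leq i \leq n.
\]
Once this is achieved, setting $L:=\bigcup_n L_n$ gives $(0:_R y_i + L) = \bigcup_{n\geq i}(0:_R y_i + L_n) = \p_i$ for every $i$, and $y_i\notin L$ because $1\notin\p_i$; hence $\{\p_1,\p_2,\ldots\}\subseteq {\rm Ass}(M/L)\cap A(R,k)$ is the desired infinite set.

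For the inductive step, with the data at stage $n$ in hand, the module $M/L_n$ fulfills the hypotheses of Theorem~\ref{1}: the intersection ${\rm Supp}(M/L_n)\cap A(R,k)$ is infinite because $L_n$ is finitely generated (this is precisely where the standing hypothesis of the present theorem enters), and the invariant places $\{\p_1,\ldots,\p_n\}$ in ${\rm Ass}(M/L_n)\cap A(R,k)$. Theorem~\ref{1} then produces a new prime $\p_{n+1}\in A(R,k)\setminus\{\p_1,\ldots,\p_n\}$ together with a class $\bar z\in M/L_n$ satisfying $(0:_R\bar z)\subseteq\p_{n+1}$ and $R\bar z\cap\Gamma_{\p_1\cdots\p_n}(M/L_n)=0$. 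Lifting to $z\in M$, I would declare $y_{n+1}:=z$ and set $L_{n+1}:=L_n+\p_{n+1}z$, which remains finitely generated. A direct computation then yields $(0:_R y_{n+1}+L_{n+1}) = \p_{n+1}+(0:_R\bar z) = \p_{n+1}$.

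The delicate point, which I expect to be the main obstacle, is the preservation of the invariant for the earlier indices $i\leq n$ after enlarging $L_n$ by $\p_{n+1}z$. Suppose $r y_i = \ell + p z$ with $\ell\in L_n$ and $p\in\p_{n+1}$; passing to $M/L_n$ gives $r\bar y_i = p\bar z\in R\bar z$. On the other hand, the stage-$n$ invariant forces $\p_i\bar y_i=0$, so $r\bar y_i$ is annihilated by $\p_1\cdots\p_n$ and hence lies in $\Gamma_{\p_1\cdots\p_n}(M/L_n)$. The disjointness clause from Theorem~\ref{1} then collapses $r\bar y_i$ to zero, so $r y_i\in L_n$ and $r\in\p_i$ by the inductive invariant. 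The base case $n=0$ is the same recipe with empty prime list (the disjointness condition being vacuous), producing the first prime in ${\rm Supp}(M)\cap A(R,k)$ via the theorem's assumption applied with $N=0$.
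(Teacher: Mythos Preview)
Your proposal is correct and follows essentially the same approach as the paper: both build an ascending chain of finitely generated submodules (your $L_n$, the paper's $T_j$) by repeatedly invoking Theorem~\ref{1}, maintain the identical invariant $(L_n:_R y_i+L_n)=\p_i$ for $i\le n$, and take the union to obtain $L$. Your verification of the invariant for $i\le n$ via the disjointness clause $R\bar z\cap\Gamma_{\p_1\cdots\p_n}(M/L_n)=0$ is exactly the paper's argument, and your computation for $i=n+1$ is in fact somewhat cleaner than the paper's radical-based argument.
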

\begin{proof}
First of all, we want to show that for each natural integer $j$
there exists a chain $T_1\subseteq T_2\subseteq \dots\subseteq T_j$,
of finitely generated submodules of $M$, prime ideals $\p_1,\dots,
\p_j \in A(R, k)$ and elements $x_1, \dots, x_j$ such that
$\p_i=T_j:_Rx_i+T_j$, for each $i=1,\dots, j$. We use induction on
$j$. Let $\p_1$ be an arbitrary element of ${\rm Supp}\,(M)\cap A(R,
k)$. There is an element $x_1\in M$ such that
$0:_Rx_1\subseteq\p_1$. Set $T_1:=\p_1x_1$. Obviously, $\p_1\in {\rm
Ass}\,(M/T_1)$ and, for $j=1$, we are done. Now, we are going to
construct $T_{j+1}$. Since, by induction hypothesis, $T_j$ is
finitely generated and the intersection ${\rm Supp}\,(M/T_j)\cap
A(R, k)$ is an infinite set then by Theorem [\ref{1}], there exists
a prime ideal
\[
\p_{j+1}\in {\rm Supp}\,(M/T_j)\cap A(R, k)\setminus \{\p_1,\dots,
\p_j\},
\] and a
nonzero element $x_{j+1}+T_j\in M/T_j$ such that
\[
(T_j:_Rx_{j+1}+T_j)\subseteq
\p_{j+1}\,\,\mbox{and}\,\,(Rx_{j+1}+T_j)/T_j\cap\Gamma_{\p_1\dots\p_j}(M/T_j)=0.
\]
Set $T_{j+1}:=\p_{j+1}x_{j+1}+T_j$. First, we prove that
$(T_{j+1}:_Rx_{i}+T_{j+1})=\p_{i}$, for $i=1,\dots, j+1$. In the
case that $i=j+1$, one just need to investigate that
$(T_{j+1}:_Rx_{j+1}+T_{j+1})$ is a subset of $\p_{j+1}$. Assume the
contrary, and let $s\in R\setminus \p_{j+1}$ be such that
$sx_{j+1}\in T_{j+1}=\p_{j+1}x_{j+1}+T_j$. Then
\[
s(Rx_{j+1}+T_{j})/T_{j}\subseteq (\p_{j+1}x_{j+1}+T_{j})
/T_{j}=\p_{j+1}(Rx_{j+1}+T_{j}) /T_{j}.
\]
This implies that
\[
s\in \sqrt{\p_{j+1}+(T_j:_RRx_{j+1})}=\p_{j+1},
\]
which is a contradiction. Therefore,
$(T_{j+1}:_Rx_{j+1}+T_{j+1})=\p_{j+1}$. On the other hand, by
induction hypothesis, for all $i=1,\dots, j$ and $s\in R\setminus
\p_i$ we have $0\neq sx_i+T_j\in \Gamma_{\p_1\dots \p_j}(M/T_j)$.
Since
\[
(Rx_{j+1}+T_j)/T_j\cap \Gamma_{\p_1\dots \p_j}(M/T_j)=0,
\]
then
\[
(Rx_{j+1}+T_j)\cap (Rsx_i+T_j) =T_j .
\] This means that $sx_i$ does not
belong to $Rx_{j+1}+T_j$, which implies that $sx_i$ does not belong
to $T_{j+1}$. Therefore,
\[
(T_{j+1}:_Rx_i+T_{j+1})=\p_i, i=1, 2,\dots, j.
\]
Set $L:=\cup_{j=1}^{\infty}T_j$. We claim that
\[
\bigcup_{j=1}^{\infty}\{\p_j\}\subseteq {\rm Ass}(M/L) \cap A(R, k).
\] It is
enough to prove that $(L:_Rx_j+L)\subseteq \p_j$. By the way of
contradiction, assume that there exits an element $s\in R\setminus
\p_j$ such that $sx_j\in L$.  Let $l$ be a natural integer such that
$sx_j\in T_l$. But, in this way $sx_j\in T_{j+l}$, which contradicts
the fact that
\[
(T_{j+l}:_Rx_j+T_{j+l})=\p_j.
\]
\end{proof}

\begin{cor}\label{c1}
Let $R$ be a Noetherian ring and $M$ a weakly Laskerian $R$-module.
Then for each nonnegative integer $k$, $M$ has a finitely generated
submodule, $N_{k}$ say, such that the intersection ${\rm
Supp}\,(M/N_k)\cap A(R, k)$ is finite.
\end{cor}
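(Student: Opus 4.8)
The plan is to derive this immediately from the Theorem stated just above it (the one asserting that if every finitely generated submodule $N$ of $M$ has ${\rm Supp}(M/N)\cap A(R,k)$ infinite, then some submodule $L$ gives ${\rm Ass}(M/L)\cap A(R,k)$ infinite). The cleanest route is a proof by contradiction: assume the conclusion fails for some fixed nonnegative integer $k$, i.e. $M$ has \emph{no} finitely generated submodule $N$ with ${\rm Supp}(M/N)\cap A(R,k)$ finite. Negating this, for \emph{every} finitely generated submodule $N\subseteq M$ the intersection ${\rm Supp}(M/N)\cap A(R,k)$ is an infinite set — which is exactly the hypothesis of the preceding Theorem.

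Applying that Theorem, I obtain a submodule $L\subseteq M$ with ${\rm Ass}(M/L)\cap A(R,k)$ infinite; in particular ${\rm Ass}(M/L)$ is an infinite set. Since ${\rm Ass}(0)=\emptyset$ is finite, $L$ cannot be all of $M$, so $L$ is a proper submodule and the definition of weakly Laskerian applies to $M/L$, forcing ${\rm Ass}(M/L)$ to be finite. This contradiction shows that the desired finitely generated submodule $N_k$ exists.

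Along the way I would record the trivial edge cases for completeness: if $A(R,k)=\emptyset$, or more generally if ${\rm Supp}(M)\cap A(R,k)$ is already finite, one simply takes $N_k=0$; and the remark that the submodule $L$ produced by the Theorem is automatically proper (otherwise ${\rm Ass}(M/L)$ would be empty, contradicting its infiniteness).

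There is essentially no obstacle here: all the substantive work has already been carried out in Theorem \ref{1} and in its iteration in the Theorem immediately preceding the corollary. The only point requiring attention is the purely logical one — recognizing that the negation of the corollary's conclusion is verbatim the hypothesis of that Theorem — together with the harmless check that the resulting $L$ is a proper submodule so that the weakly Laskerian hypothesis can be invoked.
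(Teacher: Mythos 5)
Your argument is correct and is essentially the proof the paper intends: negate the conclusion, observe that the negation is exactly the hypothesis of the (unlabeled) theorem immediately preceding the corollary, obtain a submodule $L$ with ${\rm Ass}(M/L)$ infinite, and contradict the weakly Laskerian hypothesis. (Note that the paper's citation of ``Theorem~\ref{1}'' in its one-line proof appears to be a reference slip; the step genuinely used is the preceding theorem, as you correctly identified.)
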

\begin{proof}
By Theorem [\ref{1}], and the definition of a weakly Laskerian
module every thing is evident.
\end{proof}

\section{\textbf{Main Result}}
\vspace{0.5cm} This section contains the main result of this paper
asserting that over a Noetherian ring the class of ${\rm FSF}$ and
weakly
Laskerian modules are the same classes.\\
\begin{defn}{\rm(See [\ref{divmafi1}], Definition 2.1)}
An $R$-module $M$ is said to be  \emph{weakly Laskerian}, if ${\rm
Ass} (M/N)$ is finite, for each proper submodule $N$ of $M$.
\end{defn}

\begin{defn}{\rm(See [\ref{hung}], Definition 2.1)}
An $R$-module $M$ is said to be $\emph{{\rm FSF}}$, if there exists
a \textbf{F}initely generated submodule $N$ of $M$, such that the
\textbf{S}upport of the quotient module  $M/N$ is a \textbf{F}inite
set.
\end{defn}

\begin{defn}
An $R$-module $M$ is said to be  $\emph{{\rm Minimax}}$, if there
exists a finitely generated submodule $N$ of $M$, such that $M/N$ is
an
Artinian.\\
\end{defn}
 The class of minimax modules was introduced by Z${\rm \ddot{o}}$chinger [\ref{zoshinger1}] and
 it was proved by Zink [\ref{zink}] and Enochs [\ref{Enochs}], that
 over a complete local ring a module is minimax if and only if it is
 Matlis reflexive.\\
\begin{rem}{\rm
Comparing the property of being Artinian, Laskerian, weakly
Laskerian or Minimax module, over a Noetherian ring, it is not hard
to see that
\begin{eqnarray*}
\rm Artinian\Rightarrow \rm Laskerian \Rightarrow \rm weakly\,\,
Laskerian,
\end{eqnarray*}
and
\begin{eqnarray*}
{\rm Artinian\Rightarrow \rm {Minimax\Rightarrow weakly \,\,\rm
Laskerian.}}
\end{eqnarray*}
One can find examples of ${\rm FSF}$ or weakly Laskerian modules
which is not Artinian, Laskerian or Minimax. However, the next
theorem shows that, over a Noetherian ring, the property of being
${\rm FSF}$ or weakly Laskerian module are the same properties.}
\end{rem}
\begin{thm}\label{3}
Let $R$ be a Noetherian ring and $M$ a nonzero $R$-module. The
following statements are equivalent:

\begin{enumerate}
  \item $M$ is a weakly Laskerian module;
  \item M is an ${\rm FSF}$ module.
\end{enumerate}
\end{thm}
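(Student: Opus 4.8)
The plan is to prove the two implications separately; the direction (2) $\Rightarrow$ (1) is the easy one and (1) $\Rightarrow$ (2) is where all the work, and the main obstacle, lies.

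For (2) $\Rightarrow$ (1): suppose $M$ is ${\rm FSF}$, so there is a finitely generated submodule $N\subseteq M$ with ${\rm Supp}(M/N)$ finite. Let $L$ be any proper submodule of $M$. I would use the short exact sequence $0\to (N+L)/L\to M/L\to M/(N+L)\to 0$ together with the inclusion ${\rm Ass}(M/L)\subseteq {\rm Ass}((N+L)/L)\cup{\rm Ass}(M/(N+L))$. Now $(N+L)/L$ is a quotient of the finitely generated module $N$, hence finitely generated over the Noetherian ring $R$, so ${\rm Ass}((N+L)/L)$ is finite. And $M/(N+L)$ is a quotient of $M/N$, so ${\rm Supp}(M/(N+L))\subseteq{\rm Supp}(M/N)$ is finite; since ${\rm Ass}$ of any module is contained in its support, ${\rm Ass}(M/(N+L))$ is finite as well. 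Hence ${\rm Ass}(M/L)$ is finite, so $M$ is weakly Laskerian.

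For (1) $\Rightarrow$ (2): assume $M$ is weakly Laskerian. The idea is to produce a \emph{single} finitely generated submodule $N$ with ${\rm Supp}(M/N)$ finite by combining the dimension-by-dimension finitely generated submodules supplied by Corollary~\ref{c1}. Let $d$ be an integer with ${\rm Supp}(M/N')\cap A(R,k)=\emptyset$ for all $k>d$ and all finitely generated $N'$ — here one needs to know that $\dim R<\infty$ on the relevant part of the support, which follows because $M$ weakly Laskerian forces $M$ (hence the quotients in sight) to have finite-dimensional support; alternatively work inside $R/{\rm Ann}_R(N_0)$ for a suitable finitely generated submodule. For each $k=0,1,\dots,d$ pick by Corollary~\ref{c1} a finitely generated submodule $N_k\subseteq M$ with ${\rm Supp}(M/N_k)\cap A(R,k)$ finite, and set $N:=N_0+N_1+\dots+N_d$, which is again finitely generated. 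Since $M/N$ is a quotient of each $M/N_k$, we get ${\rm Supp}(M/N)\cap A(R,k)\subseteq{\rm Supp}(M/N_k)\cap A(R,k)$ is finite for every $k\le d$, and ${\rm Supp}(M/N)\cap A(R,k)=\emptyset$ for $k>d$. As ${\rm Supp}(M/N)=\bigcup_{k\ge0}\bigl({\rm Supp}(M/N)\cap A(R,k)\bigr)$, we conclude ${\rm Supp}(M/N)$ is a finite set, i.e. $M$ is ${\rm FSF}$.

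The main obstacle is the finiteness of $d$, i.e. making sure only finitely many layers $A(R,k)$ meet ${\rm Supp}(M/N)$. I expect to handle this by first choosing one finitely generated submodule $N_0$ (e.g. from Corollary~\ref{c1} with $k=0$, or simply any nonzero finitely generated submodule) and passing to the Noetherian ring $\bar R:=R/{\rm Ann}_R(M/N_0)$; one must argue that $M/N_0$ is itself weakly Laskerian (it is, being a quotient of $M$) with $\bar R$-support equal to $V({\rm Ann}_{\bar R}(M/N_0))$, and that $\dim\bar R$ is finite. If $\dim\bar R$ is not a priori finite, the cleanest fix is to note that ${\rm Ass}(M/N_0)$ is finite (weak Laskerianness), hence $\min\bar R$ is finite, hence $\dim\bar R=\max\{\dim\bar R/\mathfrak q:\mathfrak q\in\min\bar R\}$ is finite because $R$ — and therefore $\bar R$ — is Noetherian and each $\bar R/\mathfrak q$ has finite dimension. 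Once $d:=\dim\bar R$ is pinned down the rest is the routine bookkeeping above.
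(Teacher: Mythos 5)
Your implication $(2)\Rightarrow(1)$ is correct and is essentially identical to the paper's proof (the same short exact sequence, the same finiteness observations). The problem lies in your $(1)\Rightarrow(2)$, and in fact exactly at the place you flagged as ``the main obstacle.''

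The gap is that your proposed fix for bounding $d$ is based on a false statement. You argue that since $\bar R=R/\operatorname{Ann}_R(M/N_0)$ is Noetherian with only finitely many minimal primes $\mathfrak q$, each $\bar R/\mathfrak q$ has finite Krull dimension, hence $\dim\bar R<\infty$. But a Noetherian domain can have infinite Krull dimension (Nagata's example), so ``Noetherian'' does not give you finite dimension of $\bar R/\mathfrak q$. Without a finite $d$, your sum $N:=N_0+\cdots+N_d$ is not even defined, so the whole reduction collapses. Moreover, the bound you want is essentially equivalent to what you are trying to prove: once you know $\operatorname{Supp}(M/N)$ is finite the dimensions appearing there are trivially bounded, but establishing that bound in advance, for an arbitrary Noetherian $R$, is exactly the difficulty.

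The paper avoids this entirely with a short but decisive trick that you missed: it only invokes Corollary~\ref{c1} for $k=0$ and $k=1$, sets $N:=N_0+N_1$, and then shows directly that
\[
\operatorname{Supp}(M/N)\subseteq\bigl(\operatorname{Supp}(M/N_0)\cap A(R,0)\bigr)\cup\bigl(\operatorname{Supp}(M/N_1)\cap A(R,1)\bigr).
\]
If not, pick $\mathfrak p$ maximal among the primes of $\operatorname{Supp}(M/N)$ outside this finite union. Since $N\supseteq N_0$ and $N\supseteq N_1$, $\mathfrak p$ lies in both $\operatorname{Supp}(M/N_0)$ and $\operatorname{Supp}(M/N_1)$, so $\mathfrak p$ must satisfy $\dim(R/\mathfrak p)\ge 2$. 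Then $V(\mathfrak p)$ is infinite, $V(\mathfrak p)\subseteq\operatorname{Supp}(M/N)$, and by maximality of $\mathfrak p$ all of $V(\mathfrak p)\setminus\{\mathfrak p\}$ lands inside the finite union --- a contradiction. No dimension bound on $R$ or on $\operatorname{Supp}(M)$ is needed. You should replace your dimension-bounding attempt by this maximality argument (or find some other genuine argument that works without assuming $\dim R<\infty$).
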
\label{2}
\begin{proof}
$(1)\Rightarrow (2)$ By Corollary [\ref{c1}], there exists finitely
generated submodules $N_0$ and $N_1$ such that the intersection
\[
{\rm Supp}\,(M/N_0)\cap A(R, 0) \,\,\,\,\mbox{and}\,\,\,\,{\rm
Supp}\,(M/N_1)\cap A(R, 1),
\]
are finite sets. Put $N:=N_0+N_1$.  It is enough to prove that
\[
{\rm Supp}\,(M/N)\subseteq (\,{\rm Supp}(M/N_0)\cap A(R,
0)\,)\bigcup(\,{\rm Supp}(M/N_1)\cap A(R, 1)\,).
\]
Suppose the contrary and let $\p$ be a prime ideal maximal with
respect to the property
\[
\p\in {\rm Supp}\,(M/N)\setminus [(\,{\rm Supp}(M/N_0)\cap A(R,
0))\,\bigcup\,({\rm Supp}(M/N_1)\cap A(R, 1)\,)].
\]
In view of
\[
{\rm Supp}(M/N)\subseteq {\rm Supp}(M/N_0)\cap {\rm Supp}(M/N_1),
\] we conclude that ${\rm dim}\,(R/\p)>1$ and so $V(\p)$ is not finite. For
each prime ideal $\frak Q$ which properly contains $\p$, the maximal
property of $\p$, implies that
\[
\frak Q\in [(\,{\rm Supp}(M/N_0)\cap A(R, 0)\,)\bigcup(\,{\rm
Supp}(M/N_1)\cap A(R, 1))],
\]
which contradicts the fact that the union
\[
 (\,{\rm Supp}(M/N_0)\cap
A(R, 0)\,)\bigcup(\,{\rm Supp}(M/N_1)\cap A(R, 1)),
\]
is a finite set.\\
$(2)\Rightarrow (1)$ There exists a finitely generated submodule $N$
of $M$ such that ${\rm Supp}(M/N)$ is a finite set. Let $L$ be an
arbitrary submodule of $M$.  From the short exact sequence
\[
0\rightarrow N/{N\cap L}\rightarrow M/L \rightarrow
M/{L+N}\rightarrow 0,
\]
we conclude that
\[
{\rm Ass}\,(M/L)\subseteq {\rm Ass}\,(N/{N\cap L})\,\bigcup\, {\rm
Ass}\,(M/{N+L}).
\]
This means that ${\rm Ass}\,(M/L)$ is a finite set and we are done.
\end{proof}

\begin{cor}
Let $(R, \frak m)$ be a Noetherian local ring and $M$ a weakly
Laskerian $R$-module. Then $M\otimes_RR^{*}$ is a weakly Laskerian
$R^{*}$-module, where $R^{*}$ denotes the $\frak m$-adic completion
of $R$.
\end{cor}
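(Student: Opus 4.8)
The plan is to reduce everything to the ${\rm FSF}$ characterization of Theorem \ref{3}, which is much easier to transport across the completion map than the primary-decomposition-flavored definition. By Theorem \ref{3}, since $M$ is weakly Laskerian over $R$, it is ${\rm FSF}$: there is a finitely generated submodule $N \subseteq M$ with ${\rm Supp}_R(M/N)$ finite. Applying the (flat) functor $-\otimes_R R^{*}$ to the short exact sequence $0 \to N \to M \to M/N \to 0$ gives a short exact sequence $0 \to N\otimes_R R^{*} \to M\otimes_R R^{*} \to (M/N)\otimes_R R^{*} \to 0$ of $R^{*}$-modules. Here $N\otimes_R R^{*}$ is a finitely generated $R^{*}$-module (a finite presentation of $N$ tensors up). So, invoking Theorem \ref{3} again — this time over the Noetherian ring $R^{*}$ — it suffices to show that ${\rm Supp}_{R^{*}}\big((M/N)\otimes_R R^{*}\big)$ is a finite set.

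The core step is therefore a support computation: if $K$ is an $R$-module with ${\rm Supp}_R(K)$ finite, then ${\rm Supp}_{R^{*}}(K\otimes_R R^{*})$ is finite. First I would reduce to the case where ${\rm Supp}_R(K) = \{\mathfrak p\}$ is a single prime, by filtering $K$ or simply using that support is additive on short exact sequences and ${\rm Supp}_R(K)$ is finite; more directly, one can note $\sqrt{{\rm Ann}_R(K')}$ controls things on finitely generated pieces, but the cleanest route is: for each $\mathfrak q \in {\rm Spec}(R^{*})$ lying in ${\rm Supp}_{R^{*}}(K\otimes_R R^{*})$, its contraction $\mathfrak p = \mathfrak q \cap R$ lies in ${\rm Supp}_R(K)$ (because $(K\otimes_R R^{*})_{\mathfrak q}$ is a localization of $K_{\mathfrak p}\otimes_R R^{*}$, which vanishes if $K_{\mathfrak p}=0$). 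Thus ${\rm Supp}_{R^{*}}(K\otimes_R R^{*})$ is contained in the preimage under the contraction map ${\rm Spec}(R^{*}) \to {\rm Spec}(R)$ of the finite set ${\rm Supp}_R(K)$. Since every $\mathfrak p \in {\rm Supp}_R(K)$ satisfies $\mathfrak m \supseteq \mathfrak p$ but in fact we only need: the fiber of ${\rm Spec}(R^{*}) \to {\rm Spec}(R)$ over any prime $\mathfrak p$ of $R$ is ${\rm Spec}(R^{*}\otimes_R \kappa(\mathfrak p))$, and it suffices that each such fiber is finite.

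The main obstacle is exactly this finiteness of fibers. It is not true in general that $R \to R^{*}$ has finite fibers over arbitrary primes; the saving feature is that $M$ (hence $K=M/N$) is supported at primes $\mathfrak p$ with $R/\mathfrak p$ of some dimension, but more to the point, ${\rm Supp}_R(K) \subseteq V(\mathfrak a)$ for $\mathfrak a = {\rm Ann}$ of a suitable finitely generated quotient, and one expects the relevant primes to be $\mathfrak m$-primary or close to it — actually the honest argument is that $K\otimes_R R^{*}$ is annihilated by $\mathfrak a R^{*}$ where $V(\mathfrak a) = {\rm Supp}_R(K)$ is finite, so ${\rm Supp}_{R^{*}}(K\otimes_R R^{*}) \subseteq V(\mathfrak a R^{*})$, and this is finite precisely when $R^{*}/\mathfrak a R^{*}$ has finitely many primes. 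Since $R^{*}/\mathfrak a R^{*}$ is the $\mathfrak m/\mathfrak a$-adic completion of the Noetherian ring $R/\mathfrak a$, and $R/\mathfrak a$ has only finitely many primes, it is a ring of dimension $0$ if $\mathfrak m$ is the only maximal... — here one must be careful: ${\rm Supp}_R(K)$ finite with $(R,\mathfrak m)$ local forces every $\mathfrak p \in {\rm Supp}_R(K)$ with $\mathfrak p \neq \mathfrak m$ to be minimal over $\mathfrak a$ and $V(\mathfrak p)$ finite, so in fact $\dim R/\mathfrak a \le 1$ is not guaranteed, but $R/\mathfrak a$ being a Noetherian ring with finite spectrum is semilocal of dimension $\le 1$; its completion at $\mathfrak m$ then has finite spectrum. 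I would spell this out: $R^{*}/\mathfrak a R^{*} \cong (R/\mathfrak a)^{\widehat{\mathfrak m}}$, a Noetherian local ring whose spectrum injects into ${\rm Spec}(R/\mathfrak a)$ by going-down/faithful flatness considerations combined with the fact that a finite poset of primes pulls back to a finite set under the completion of a ring with finite spectrum — concretely, $(R/\mathfrak a)^{\widehat{\mathfrak m}}$ is a quotient of $R/\mathfrak a$ localized-and-completed and has the same (finite) set of minimal primes plus possibly $\mathfrak m$, all of which are finite in number. Hence $V(\mathfrak a R^{*})$ is finite, so $(M/N)\otimes_R R^{*}$ is ${\rm FSF}$ over $R^{*}$, completing the proof.
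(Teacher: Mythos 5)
Your overall route coincides with the paper's: invoke Theorem~\ref{3} to replace the weakly Laskerian hypothesis by the ${\rm FSF}$ one, use flatness of $R\to R^{*}$ to keep $N\otimes_R R^{*}$ as a finitely generated submodule of $M\otimes_R R^{*}$, reduce to showing that ${\rm Supp}_{R^{*}}\bigl((M/N)\otimes_R R^{*}\bigr)$ is finite, observe that this support lies in $V(JR^{*})$ where $J$ is the radical cutting out ${\rm Supp}_R(M/N)$, and finally observe that $V(JR^{*})$ is finite. Your contraction/localization argument (that $\mathfrak q\in{\rm Supp}_{R^{*}}$ forces $\mathfrak q\cap R\in{\rm Supp}_R$, hence $\mathfrak q\supseteq JR^{*}$) is the correct justification of the containment, and it is exactly the content of the paper's ``it is easy to see.''

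However, two intermediate claims in your write-up are false and should be deleted. First, ``$K\otimes_R R^{*}$ is annihilated by $\mathfrak a R^{*}$ where $V(\mathfrak a)={\rm Supp}_R(K)$'': a module with support $V(\mathfrak a)$ need not be killed by $\mathfrak a$ (nor by any power of $\mathfrak a$ when $K$ is not finitely generated — e.g.\ $K=\bigoplus_{n\geq1}R/\mathfrak m^n$ has support $\{\mathfrak m\}$ but is not annihilated by any power of $\mathfrak m$); your contraction argument already gives the desired containment without any annihilation statement. Second, the assertion that ${\rm Spec}\bigl((R/\mathfrak a)^{\widehat{\phantom{o}}}\bigr)$ ``injects into ${\rm Spec}(R/\mathfrak a)$'' and ``has the same set of minimal primes'' is wrong in general — completion can split minimal primes (the nodal cubic is the standard example). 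The clean argument, which is what the paper actually uses, is dimension-theoretic: finiteness of ${\rm Supp}_R(M/N)$ over a local Noetherian ring forces $\dim R/J\leq 1$ (a prime with infinitely many primes above it cannot lie in a finite support set), completion preserves dimension, so $R^{*}/JR^{*}$ is a Noetherian local ring of dimension $\leq 1$, and such a ring has only finitely many primes (its minimal primes together with $\mathfrak m R^{*}$). With that substitution your proof becomes correct and matches the paper's.
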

\begin{proof}
By Theorem [\ref{3}], there exists a finitely generated submodule
$N$ of $M$ such that ${\rm {\rm Supp}}_{_{R}}\,(M/N)$ is finite and
so ${\rm dim}_{_{R}}\,(M/N)\leq 1$. Set $J:=\bigcap_{\p\in {\rm {\rm
Supp}}\,(M/N)}\p$. Then ${\rm dim}\, (R/J)\leq 1$ and ${\rm {\rm
Supp}}_R\,(M/N)=V(J)$. It is easy to see that, $${\rm
Supp}_{_{R^{*}}}\,(M/N\otimes_RR^{*})\subseteq V(JR^{*}).$$ Since
${\rm dim}\,R^{*}/JR^{*}\leq 1$ and $R^{*}$ is a local ring then
$V(JR^{*})$ and, consequently, ${\rm {\rm {\rm
Supp}}}_{_{R^{*}}}\,(M/N\otimes_RR^{*})$ is a finite set. Now, by
Theorem [\ref{3}], we conclude that $M\otimes_RR^{*}$ is a weakly
Laskerian $R^{*}$-module.\\
\end{proof}
The following theorem shows that the property of being weakly
Laskerian ascends under the local finite integral ring extensions.\\

\begin{cor}
Let $(R, \frak m) \rightarrow (S, \frak n)$ be a Noetherian local
ring homomorphism in which $S$ is a finite $R$-module. Then every
weakly Laskerian $S$-module is a weakly Laskerian $R$-module.
\end{cor}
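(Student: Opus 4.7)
The plan is to bootstrap from Theorem~\ref{3}: since over a Noetherian ring ``weakly Laskerian'' and ``FSF'' coincide, it suffices to show that an $S$-module which is FSF over $S$ is FSF when viewed as an $R$-module; one final application of Theorem~\ref{3} (over $R$) then gives the result.

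Concretely, let $M$ be a weakly Laskerian $S$-module. By Theorem~\ref{3} applied over $S$, choose a finitely generated $S$-submodule $N\subseteq M$ with ${\rm Supp}_S(M/N)$ finite. Because $S$ is a finite $R$-module and $N$ is finitely generated over $S$, the module $N$ is finitely generated over $R$ as well (multiply $S$-generators of $N$ by $R$-generators of $S$). The only thing left is to verify that ${\rm Supp}_R(M/N)$ is finite.

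For this I would establish the inclusion
\[
{\rm Supp}_R(M/N)\;\subseteq\;\{\mathfrak{q}\cap R:\mathfrak{q}\in{\rm Supp}_S(M/N)\},
\]
which is finite by hypothesis. Given $\mathfrak{p}\in{\rm Supp}_R(M/N)$, the localization $(M/N)_{\mathfrak{p}}$ is a nonzero module over $S_{\mathfrak{p}}$ (the localization of $S$ at $R\setminus\mathfrak{p}$). Because $S$ is module-finite over $R$, the ring $S_{\mathfrak{p}}$ is semi-local and its maximal ideals are precisely the $\mathfrak{q}S_{\mathfrak{p}}$ with $\mathfrak{q}\in{\rm Spec}(S)$ satisfying $\mathfrak{q}\cap R=\mathfrak{p}$. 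Every nonzero module over a commutative ring has some maximal ideal in its support (pick a nonzero $y$ and any maximal ideal containing ${\rm ann}(y)$), so there is such a $\mathfrak{q}$ with $(M/N)_{\mathfrak{q}}\neq 0$; then $\mathfrak{q}\in{\rm Supp}_S(M/N)$ and $\mathfrak{q}\cap R=\mathfrak{p}$, as required.

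Hence $M$ is FSF as an $R$-module, and Theorem~\ref{3} applied over $R$ shows that $M$ is weakly Laskerian over $R$. The main (and essentially only non-trivial) obstacle is the displayed support comparison; the finiteness hypothesis on $S/R$ is used twice, both to transfer finite generation of $N$ from $S$ to $R$ and, more essentially, to guarantee the semi-local structure of $S_{\mathfrak{p}}$ which confines the primes of $S$ over $\mathfrak{p}$ to a finite set.
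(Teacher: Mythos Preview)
Your argument is correct, and the overall architecture matches the paper's: apply Theorem~\ref{3} over $S$ to obtain a finitely generated $S$-submodule $N$ with ${\rm Supp}_S(M/N)$ finite, show $M$ is FSF over $R$, and apply Theorem~\ref{3} over $R$. You are in fact more explicit than the paper in noting that $N$, being finitely generated over $S$ with $S$ finite over $R$, is finitely generated over $R$.

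The one genuine difference is in the middle step. You prove the support comparison
\[
{\rm Supp}_R(M/N)\subseteq\{\mathfrak q\cap R:\mathfrak q\in{\rm Supp}_S(M/N)\}
\]
directly, by localizing at $\mathfrak p$ and finding a maximal ideal of the semi-local ring $S_{\mathfrak p}$ in the support of $(M/N)_{\mathfrak p}$. The paper instead sets $J=\bigcap_{\mathfrak q\in{\rm Supp}_S(M/N)}\mathfrak q$ and $I=J\cap R$, observes ${\rm Supp}_R(M/N)\subseteq V(I)$ and $\dim(R/I)\le\dim(S/J)\le 1$ (via integrality), and then uses that a \emph{local} Noetherian ring of dimension at most one has only finitely many primes. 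Your route is slightly more elementary and, notably, never uses the local hypothesis on $R$ or $S$; it shows the result holds for any finite homomorphism of Noetherian rings. The paper's dimension argument is the same device used in the preceding corollary on completion, so it has the virtue of uniformity, but it genuinely needs locality to pass from $\dim(R/I)\le 1$ to finiteness of $V(I)$.
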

\begin{proof}
Let $M$ be a weakly Laskerian $S$-module. By Theorem [\ref{3}],
there exists a finitely generated submodule of $M$, say $N$, such
that ${\rm Supp}_S(M/N)$ is a finite set. Set $J:=\cap_{\p\in {\rm
Supp}_S(M/N)}\p$. Then ${\rm Supp}_S(M/N)=V(J)$ and ${\rm
dim}_S(M/N)={\rm dim}\,(S/J)\leq 1$. Set $I=J\cap R$.  Since $S/J$
is an integral extension of $R/I$ then, by lying over theorem, one
can conclude that ${\rm Supp}_R(M/N)\subseteq V(I)$.  As ${\rm
dim}\,(R/I)\leq 1$ and $R$ is a local ring then $V(I)$ and so ${\rm
Supp}_R(M/N)$ is a finite set. Now, the result is evident by Theorem
[\ref{3}].
\end{proof}

\nocite{*}
\bibliographystyle{amsplain}

\end{document}